\newtheorem{theorem}{Theorem}[section]
\newtheorem{lemma}[theorem]{Lemma}
\theoremstyle{definition}
\theoremstyle{remark}
\numberwithin{equation}{section}
\newcommand{\R}{\mathbb{R}}
\newcommand{\N}{\mathbb{N}}
\newcommand{\Prob}{\mathbb{P}}    
\newcommand{\Var}{\operatorname{Var}}
\newcommand{\Cov}{\operatorname{Cov}}
\newcommand{\diff}{\,\mathrm{d}}
\newcommand{\dbl}{{\ensuremath {\operatorname{db} }}}
\newcommand{\sbl}{{\ensuremath {\operatorname{sb} }}}
\begin{document}

\title{\fontsize{16}{19} On the maximal correlation coefficient for the bivariate Marshall Olkin distribution}

\author{
Axel B\"ucher\thanks{Ruhr-Universität Bochum, Fakultät für Mathematik. Email: \href{mailto:axel.buecher@rub.de}{axel.buecher@rub.de}} \orcidlink{0000-0002-1947-1617}
\and
Torben Staud\thanks{Ruhr-Universität Bochum, Fakultät für Mathematik. Email: \href{mailto:torben.staud@rub.de}{torben.staud@rub.de}}
}

\date{\today}

\maketitle

\begin{abstract} 
We prove a formula for the maximal correlation coefficient of the bivariate Marshall Olkin distribution that was conjectured in Lin, Lai, and Govindaraju (2016, Stat. Methodol., 29:1–9). The formula is applied to obtain a new proof for a variance inequality in extreme value statistics that links the disjoint and the sliding block maxima method.
\end{abstract}

\color{black}
\noindent\textit{Keywords.} 
Bivariate Exponential Distribution;
Disjoint and Sliding Block Maxima;
Extreme Value Statistics;
Marshall Olkin Copula;
Maximal Correlation Coefficient.

\section{Introduction}
\label{sec:intro}

The bivariate Marshall Olkin exponential distribution \citep{MarOlk67} arises from considering random lifetimes within a two-component system, say $(X_1, X_2)$, where the components are subject to three different sources of fatal shocks. The occurrence times of the shocks are modelled by three independent exponential variables $Z_1, Z_2, Z_{12}$  with positive parameters $\lambda_1, \lambda_2, \lambda_{12}$, respectively. The first component of the system fails as soon as any of the two shocks $Z_1$ or $Z_{12}$ has occurred, that is, at time $X_1 = Z_1 \wedge Z_{12}$. Likewise, the second component fails at time $X_2 = Z_{2} \wedge Z_{12}$. A straightforward calculation then shows that the joint survival function of $(X_1, X_2)$ is
\begin{align*}
\bar H(x_1, x_2) 
= \Prob(X_1 > x_1, X_2 > x_2) &= \Prob(Z_1 > x_1, Z_2 > x_2, Z_{12} > x_1 \vee  x_2)  \\
&= \exp\{  - \lambda_1 x_1  - \lambda_2 x_2  - \lambda_{12}(x_1 \vee x_2)  \} \qquad (x_1, x_2 > 0),
\end{align*}
while the marginal survival functions satisfy $\bar H_j(x_j) = \Prob(X_j>x_j) = \exp\{-(\lambda_j + \lambda_{12}) x_j \}$.
In particular, the marginals are exponentially distributed. 

The Marshall Olkin distribution has been well-studied in the literature, with precise formulas being available for its Laplace transform, its product moments, or its Pearson, Kendall or Spearman correlation. We refer to \cite{MarOlk67, Lin2016}, among others.
The present work is motivated by an open conjecture mentioned in \cite{Lin2016} which concerns the maximal correlation coefficient of the Marshall Olkin distribution (see their open problem B). Recall that the maximal correlation coefficient is defined as
\begin{align}
\label{eq:maxcorr}
R(H) := R(X_1,X_2) := \sup_{f,g} \mathrm{Corr}(f(X_1), g(X_2)),
\end{align}
where the supremum is taken over all functions $f$ and $g$ such that $\Var(f(X_1)), \Var(g(X_2)) \in (0,\infty)$ exists and where $H$ denotes the cumulative distribution function (cdf) of $(X_1, X_2)$. In the general, the calculation of maximal correlation coefficients is difficult, but based on extensive moment calculations, \cite{Lin2016} conjecture that 
\begin{align} \label{eq:mo-corr}
R(X_1, X_2) = \frac{\lambda_{12}}{\sqrt {\lambda_1 + \lambda_{12}} \sqrt {\lambda_2 + \lambda_{12}}  }
\end{align}
for the bivariate Marshall Olkin distribution.
The main result of this note is a proof, given in Section~\ref{sec:mo-corr}. Our proof is based on certain elegant arguments from \cite{Yu2008}, who derived a new proof of the Gebelein-Lancaster theorem. The latter states that the maximum correlation of the
bivariate Gaussian distribution with correlation 
$\rho$ is equal to $|\rho|$.

Next to the proof of \eqref{eq:mo-corr}, a major contribution of this note is an application of \eqref{eq:mo-corr} to provide a new and elegant proof for an important variance inequality in extreme value statistics. Details are provided in Section~\ref{sec:extremes}.

\section{The maximal correlation for the Marshall Olkin distribution}
\label{sec:mo-corr}

In view of the continuity of the marginal survival functions, Sklar's theorem implies that the random vector $(X_1, X_2)$ has a unique survival copula $\hat C$, that is, a bivariate cdf $\hat C$ with standard uniform margins, such that $\bar H(x_1,x_2) = \hat C (\bar H_1(x_1), \bar H_2(x_2))$ for all $x_1, x_2 \ge 0$ \citep{Nel06}. A straightforward calculation shows that this copula is given by $\hat C = C_{\phi, \psi}$, where
\begin{align} \label{eq:mo-cop}
C_{\phi, \psi}(u,v) = \min(u^{1-\phi} v, uv^{1-\psi}), \quad u,v \in [0,1]^2,
\end{align}
with $\phi = \lambda_{12} / (\lambda_{1}+ \lambda_{12})$ and $\psi = \lambda_{12} / (\lambda_{2}+ \lambda_{12})$, see also \cite{Lin2016, Emb01}. Since the maximal correlation coefficient is invariant under (square-integrable) transformations of the margins, we have $R(H)=R(\hat C)=R(C_{\phi,\psi})$ and hence Equation~\eqref{eq:mo-corr} is an immediate corollary of the following theorem.

\begin{theorem} \label{theo:main}
For parameters $\phi, \psi \in [0,1]^2$, we have $R(C_{\phi, \psi}) = \sqrt{\phi \psi}$.
\end{theorem}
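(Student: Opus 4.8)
The plan is to establish matching bounds $\sqrt{\phi\psi}\le R(C_{\phi,\psi})\le\sqrt{\phi\psi}$. Since $R$ is invariant under strictly monotone transformations of the margins, I would work throughout with the exponential Marshall--Olkin vector $(X_1,X_2)=(Z_1\wedge Z_{12},\,Z_2\wedge Z_{12})$, whose survival copula is $C_{\phi,\psi}$ with $\phi=\lambda_{12}/(\lambda_1+\lambda_{12})$ and $\psi=\lambda_{12}/(\lambda_2+\lambda_{12})$; every $(\phi,\psi)\in(0,1)^2$ is realized this way, and the boundary cases follow from the degenerate models ($\lambda_{12}=0$ gives independence, $\lambda_1=0$ gives $X_1=Z_{12}$, etc.). For the lower bound I would simply feed the definition the indicator pair $f(x)=g(x)=\mathbf{1}\{x>t\}$ at a common threshold $t>0$: using $\bar H(t,t)=e^{-(\lambda_1+\lambda_2+\lambda_{12})t}$ and the exponential margins, one finds $\Cor(f(X_1),g(X_2))=e^{-(\lambda_1+\lambda_2+\lambda_{12})t}(1-e^{-\lambda_{12}t})\big/\sqrt{\Var f(X_1)\,\Var g(X_2)}\to\sqrt{\phi\psi}$ as $t\downarrow0$, whence $R\ge\sqrt{\phi\psi}$.

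The upper bound is the substance of the argument, and its engine is a structural observation: conditionally on the shared shock $Z_{12}=z$ we have $X_1=Z_1\wedge z$ and $X_2=Z_2\wedge z$, so that $X_1\perp X_2\mid Z_{12}$. For centred $f,g$ this yields $\Exp[f(X_1)g(X_2)]=\Exp[\tilde f(Z_{12})\tilde g(Z_{12})]$ with $\tilde f=\Exp[f(X_1)\mid Z_{12}]$ and $\tilde g=\Exp[g(X_2)\mid Z_{12}]$. Cauchy--Schwarz then reduces the whole problem to a single-variable variance inequality: it suffices to prove the sub-lemma $\Var(\Exp[f(X_1)\mid Z_{12}])\le\phi\,\Var(f(X_1))$ together with its $\psi$-analogue for $X_2$, because multiplying the two bounds gives $\Cov(f(X_1),g(X_2))\le\sqrt{\phi\psi}\,\sqrt{\Var f(X_1)\,\Var g(X_2)}$, i.e.\ $R\le\sqrt{\phi\psi}$.

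To prove the sub-lemma I would import Yu's device. Writing $\mu=\lambda_1$, $\nu=\lambda_{12}$, the conditional expectation is $G(w):=\Exp[f(X_1)\mid Z_{12}=w]=e^{-\mu w}f(w)+\mu\int_0^w e^{-\mu a}f(a)\,da$, and a direct differentiation reveals the crucial identity $G'(w)=e^{-\mu w}f'(w)$. I would then express both variances through the Hoeffding covariance identity for the exponential law, $\Var(h(Y))=\iint h'(s)h'(t)\,(e^{-\lambda(s\vee t)}-e^{-\lambda(s+t)})\,ds\,dt$, applied with $\lambda=\nu$ to $G(Z_{12})$ and with $\lambda=\mu+\nu$ to $f(X_1)$. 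After substituting $G'=e^{-\mu\,\cdot}f'$, the target inequality becomes $\iint f'(s)f'(t)\,K(s,t)\,ds\,dt\le0$ for the explicit symmetric kernel $K(s,t)=e^{-(\mu+\nu)(s\vee t)}\beta(s\wedge t)$, where $\beta(n)=e^{-\mu n}-\phi-(1-\phi)e^{-(\mu+\nu)n}$. Setting $\Psi(s)=\int_s^\infty f'(t)e^{-(\mu+\nu)t}\,dt$ and integrating by parts, the quadratic form collapses to $\int_0^\infty \nu e^{\nu s}(1-e^{\mu s})\,\Psi(s)^2\,ds$, which is manifestly $\le0$ since $1-e^{\mu s}\le0$; the boundary contribution drops out because $\beta(0)=0$. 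This delivers the sub-lemma with the sharp constant $\phi$, completing the upper bound.

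The main obstacle is exactly this sub-lemma: a crude conditional Jensen step bounds $\Var(\Exp[f(X_1)\mid Z_{12}])$ only by $\Var(f(X_1))$, giving the useless $R\le1$, while the atomic part of the law of $X_1$ given $Z_{12}$ blocks any naive pointwise domination of measures from producing the factor $\phi$. The three ingredients that conspire to give the \emph{sharp} constant are the Hoeffding reformulation, the factorization of $K$ through the sign-definite function $\beta$, and the identity $G'=e^{-\mu\,\cdot}f'$. On the technical side I would first prove the sub-lemma for smooth, compactly supported $f$, so that the differentiations and the vanishing of the boundary terms are legitimate, and then pass to general $f\in L^2$ by density, using that both sides of the inequality depend continuously on $f$.
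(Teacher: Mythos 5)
Your proof is correct, and while its overall architecture coincides with the paper's, it replaces both pieces of hard work by genuinely different arguments. Like the paper, you exploit the conditional independence $X_1\perp X_2\mid Z_{12}$ to reduce the problem to the two pairs $(X_1,Z_{12})$ and $(X_2,Z_{12})$; your Cauchy--Schwarz step is in effect an inlined re-derivation of Lemma~\ref{lem:yu1}, since your sub-lemma $\Var(\Exp[f(X_1)\mid Z_{12}])\le\phi\,\Var(f(X_1))$ is exactly the statement $R(X_1,Z_{12})\le\sqrt{\phi}$, i.e.\ the paper's claim $r(\xi)\le\sqrt{\xi}$ for the cdf $D_\xi$. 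The differences are these. For the lower bound, the paper computes correlations of the power functions $f_k(x)=x^{k+1}/(k+1)$ with tuned indices $k=\psi m$, $\ell=\phi m$; your indicator pair $\mathbf{1}\{x>t\}$ with $t\downarrow0$ is more elementary, and the computation checks out: the covariance is asymptotically $\lambda_{12}t$ while each standard deviation is asymptotically $\sqrt{(\lambda_j+\lambda_{12})t}$, giving the limit $\sqrt{\phi\psi}$. For the key upper bound $r(\xi)\le\sqrt{\xi}$, the paper argues softly: submultiplicativity $r(\xi_1\xi_2)\le r(\xi_1)r(\xi_2)$ with equality on the diagonal, Lemma~\ref{lem:yu2} giving $r(1/2)\le 2^{-1/2}$, monotonicity, and density of the set $\{2^{-m/2^n}\}$ --- an argument that, notably, requires proving the lower bound $r(\xi)\ge\sqrt{\xi}$ as an ingredient of the upper bound. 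You instead prove the bound by direct computation, and I verified the algebra: the identity $G'(w)=e^{-\mu w}f'(w)$ holds, the difference kernel factors as $K(s,t)=e^{-(\mu+\nu)(s\vee t)}\beta(s\wedge t)$ with $\beta(0)=0$, and since $\phi(\mu+\nu)=\nu$, the derivative of $A(s)=\beta(s)e^{(\mu+\nu)s}$ is indeed $\nu e^{\nu s}(1-e^{\mu s})$, so the integration by parts (with vanishing boundary terms for smooth compactly supported $f$) yields the sign-definite form $\int_0^\infty \nu e^{\nu s}(1-e^{\mu s})\Psi(s)^2\,\mathrm{d}s\le 0$. What each approach buys: the paper avoids all computation in the critical step but must quote both of Yu's lemmas as black boxes and run the functional-equation/density argument; your route is fully self-contained, never needs Lemma~\ref{lem:yu2} or the lower bound inside the upper bound, and produces the sharp constant in one stroke, at the price of the explicit quadratic-form manipulation and the final density step --- which is legitimate and standard, since $f\mapsto\Exp[f(X_1)\mid Z_{12}]$ is an $L^2$-contraction, so both sides of your sub-lemma are continuous in $f$ with respect to $L^2$ of the law of $X_1$, in which smooth compactly supported functions are dense.
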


The proof is based on the following two lemmas from \cite{Yu2008}, which we quote in full for the sake of readability.

\begin{lemma}[\citealp{Yu2008}]\label{lem:yu1}
If non-degenerate random variables $X$ and $Y$ are conditionally independent given $Z$, then
$
R(X,Y) \le R(X,Z)R(Y,Z).
$
Moreover, equality holds if $( X , Z )$ and $( Y , Z )$ have the same distribution.
\end{lemma}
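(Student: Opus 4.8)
The plan is to recast the maximal correlation coefficient as the operator norm of a conditional expectation operator and then exploit conditional independence to factor this operator. For a non-degenerate random variable $W$, write $L^2_0(W)$ for the Hilbert space of functions $h(W)$ with $\Exp[h(W)]=0$ and $\Exp[h(W)^2]<\infty$, equipped with $\langle h_1,h_2\rangle=\Exp[h_1(W)h_2(W)]$. Since $\mathrm{Corr}$ is invariant under affine transformations, I may restrict the supremum in \eqref{eq:maxcorr} to standardized functions and obtain $R(X,Y)=\sup\Exp[f(X)g(Y)]$ over unit vectors $f\in L^2_0(X)$, $g\in L^2_0(Y)$. First I would introduce the conditional expectation operator $S\colon L^2_0(Y)\to L^2_0(X)$, $Sg=\Exp[g(Y)\mid X]$, which is a contraction because conditional expectation is an $L^2$-contraction mapping mean-zero to mean-zero. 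The identity $\Exp[f(X)g(Y)]=\langle f,Sg\rangle$ then shows $R(X,Y)=\|S\|$, the operator norm. The very same computation gives $R(X,Z)=\|A\|$ and $R(Y,Z)=\|B\|$ for the analogous operators $A\colon L^2_0(Z)\to L^2_0(X)$, $Ah=\Exp[h(Z)\mid X]$, and $B\colon L^2_0(Y)\to L^2_0(Z)$, $Bg=\Exp[g(Y)\mid Z]$.

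The key step is the factorization $S=AB$, which is where conditional independence enters. Since $X$ and $Y$ are conditionally independent given $Z$, one has $\Exp[g(Y)\mid X,Z]=\Exp[g(Y)\mid Z]$, so the tower property yields
\[
Sg=\Exp[g(Y)\mid X]=\Exp\bigl[\Exp[g(Y)\mid X,Z]\bigm| X\bigr]=\Exp\bigl[\Exp[g(Y)\mid Z]\bigm| X\bigr]=A(Bg).
\]
Submultiplicativity of the operator norm then delivers the inequality immediately:
\[
R(X,Y)=\|S\|=\|AB\|\le\|A\|\,\|B\|=R(X,Z)\,R(Y,Z).
\]

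For the equality claim under $(X,Z)\overset{d}{=}(Y,Z)$, I would identify $A$ with the adjoint of $B$. The distributional equality forces $X\overset{d}{=}Y$, so $U\colon L^2_0(X)\to L^2_0(Y)$, $f(X)\mapsto f(Y)$, is a well-defined isometric isomorphism; moreover a conditional expectation depends only on the joint law, so $\Exp[h(Z)\mid X=\cdot]$ and $\Exp[h(Z)\mid Y=\cdot]$ are the same function. This yields $UA=B^{*}$, where $B^{*}\colon L^2_0(Z)\to L^2_0(Y)$, $B^{*}h=\Exp[h(Z)\mid Y]$, is the Hilbert-space adjoint of $B$ (as one checks from $\langle Bg,h\rangle=\Exp[g(Y)h(Z)]=\langle g,B^{*}h\rangle$). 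Consequently $\|A\|=\|B^{*}\|=\|B\|$ and $S=AB=U^{-1}B^{*}B$, so that, $U$ being an isometry, $R(X,Y)=\|S\|=\|B^{*}B\|=\|B\|^2=R(X,Z)\,R(Y,Z)$.

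I expect the main obstacle to be the careful measure-theoretic bookkeeping behind the operator-norm characterization: verifying that $S$, $A$, $B$ are genuine bounded operators between the correct $L^2_0$ spaces, that the reduction to standardized functions in \eqref{eq:maxcorr} is legitimate, and, for the equality part, that the identity $UA=B^{*}$ transports rigorously under the isometry $U$ once the regular conditional expectation is recognized as a function of the conditioning variable alone. The algebraic skeleton—factorization plus submultiplicativity, and then $\|B^{*}B\|=\|B\|^2$—is short; the real work lies in justifying each operator-theoretic statement and in handling the integrability and non-degeneracy conditions so that all correlations and norms are well defined.
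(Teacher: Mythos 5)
Your proof is correct, but note that the paper itself offers no proof of this lemma: it is quoted verbatim from \citet{Yu2008}, so the comparison must be with Yu's original argument. Yu proceeds elementarily: for standardized $f,g$, conditional independence gives the identity $\Exp[f(X)g(Y)]=\Exp\bigl[\Exp(f(X)\mid Z)\,\Exp(g(Y)\mid Z)\bigr]$, then Cauchy--Schwarz together with the observation that $\sup_g \mathrm{Corr}(f(X),g(Z))=\bigl(\Var \Exp[f(X)\mid Z]\bigr)^{1/2}$ (the inner supremum is attained at $g\propto\Exp[f(X)\mid Z]$) yields the inequality, and for the equality part one takes $g=f$ and notes $\mathrm{Corr}(f(X),f(Y))=\Var \Exp[f(X)\mid Z]$, whose supremum over $f$ is $R(X,Z)^2$. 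Your argument is precisely this proof translated into operator language: the factorization $S=AB$ is Yu's conditioning identity, submultiplicativity $\|AB\|\le\|A\|\|B\|$ packages the Cauchy--Schwarz step, and the C*-identity $\|B^{*}B\|=\|B\|^{2}$ is exactly Yu's computation $\sup_f \Var\Exp[f(X)\mid Z] = R(X,Z)^2$, since $\|B^{*}B\|=\sup_{\|g\|=1}\|Bg\|^{2}$. All your steps check out: $R(X,Y)=\|S\|$ is the standard bilinear-form characterization of the operator norm; $UA=B^{*}$ is legitimate because the conditional expectation $\Exp[h(Z)\mid X=\cdot\,]$ is determined (a.e.) by the joint law of $(X,Z)$; and you correctly keep conditional independence in force in the equality part, where $S=AB$ is still needed. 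What the operator formulation buys is that the equality case becomes structurally transparent (a positive operator $B^{*}B$ up to a surjective isometry), at the cost of the measure-theoretic bookkeeping you flag yourself; Yu's version is more elementary and self-contained, requiring nothing beyond Cauchy--Schwarz and the tower property, which is presumably why the paper simply cites it.
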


\begin{lemma}[\citealp{Yu2008}]\label{lem:yu2}
If non-degenerate random variables $X$ and $Y$ are independent and identically distributed, and $Z = f(X,Y)$, where $f$ is a symmetric function of $x$ and $y$, then $R(X,Z) \le 2^{-1/2}$.
\end{lemma}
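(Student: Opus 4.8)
The plan is to use the operator-theoretic characterization of the maximal correlation and to exploit the symmetry of $f$ in a single Cauchy--Schwarz argument. Recall that, after centering, one has
\[
R(X,Z)^2 = \sup_{g} \Exp\big[\big(\Exp[g(Z)\mid X]\big)^2\big],
\]
where the supremum runs over all functions $g$ with $\Exp[g(Z)]=0$ and $\Exp[g(Z)^2]=1$; indeed, for a fixed such $g$ the function of $X$ maximizing the correlation with $g(Z)$ is precisely the conditional expectation $\Exp[g(Z)\mid X]$, so that the optimal correlation equals its $L^2$-norm. Hence it suffices to show that $\Exp[\phi(X)^2]\le \tfrac12$ for every admissible $g$, where I abbreviate $\phi(x):=\Exp[g(Z)\mid X=x]$.

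The key structural input is that $(X,Z)$ and $(Y,Z)$ share the same joint distribution. This holds because $Z=f(X,Y)=f(Y,X)$ by symmetry of $f$, so that the swap of the i.i.d.\ variables $X$ and $Y$ is a distribution-preserving map carrying $(X,Z)$ to $(Y,Z)$. Consequently the \emph{same} function $\phi$ also represents the conditional expectation on the $Y$-side, i.e.\ $\Exp[g(Z)\mid Y]=\phi(Y)$, and moreover $\phi$ is centered since $\Exp[\phi(X)]=\Exp[g(Z)]=0$.

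Next I would test $g(Z)$ against the symmetric combination $\phi(X)+\phi(Y)$. Writing $a:=\Exp[\phi(X)^2]$, the tower property together with the two conditional-expectation identities gives $\Exp[g(Z)\,\phi(X)]=\Exp[g(Z)\,\phi(Y)]=a$, hence
\[
\Exp\big[g(Z)\,(\phi(X)+\phi(Y))\big]=2a .
\]
On the other hand, because $X$ and $Y$ are independent and $\phi$ is centered, the cross term vanishes and $\Exp[(\phi(X)+\phi(Y))^2]=2a$. Applying Cauchy--Schwarz to the displayed inner product, using $\Exp[g(Z)^2]=1$, then yields $2a\le\sqrt{2a}$, which forces $a\le\tfrac12$ and therefore $R(X,Z)\le 2^{-1/2}$, as claimed.

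The only genuinely delicate point is the distributional identity $(X,Z)\overset{d}{=}(Y,Z)$ and the attendant claim that a single function $\phi$ simultaneously realizes both conditional expectations; everything else is a short computation. I would make this rigorous by invoking the measure-preserving involution $(X,Y)\mapsto(Y,X)$ on the product space and the invariance of $f(X,Y)$ under it. A minor bookkeeping caveat is the degenerate case $a=0$, in which the bound is trivial, so that the division by $\sqrt{2a}$ in the last step is needed only when $a>0$.
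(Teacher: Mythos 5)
Your argument is correct. Note first that the paper does not prove this lemma at all: it is quoted verbatim from \citet{Yu2008}, so the only meaningful comparison is with Yu's original argument --- and yours is essentially that argument, namely the symmetrization-plus-Cauchy--Schwarz trick resting on the exchangeability identity $(X,Z)\overset{d}{=}(Y,Z)$, which you justify correctly via the measure-preserving swap $(X,Y)\mapsto(Y,X)$ and the symmetry of $f$. All the individual steps check out: $\phi=\Exp[g(Z)\mid X]$ is centered and square-integrable by the tower property and conditional Jensen, the same function $\phi$ versions both conditional expectations because conditional expectations depend only on the joint law, $\Exp[(\phi(X)+\phi(Y))^2]=2a$ by independence and centering, and Cauchy--Schwarz gives $2a\le\sqrt{2a}$, hence $a\le\tfrac12$, with the $a=0$ case trivial as you note. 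One small remark: the detour through the operator-theoretic characterization $R(X,Z)^2=\sup_g\Exp[\phi(X)^2]$ is dispensable. For \emph{arbitrary} centered, normalized $h$ and $g$, the same exchangeability gives $\Exp[g(Z)h(X)]=\Exp[g(Z)h(Y)]$, whence
\begin{align*}
2\,\Exp[g(Z)h(X)] = \Exp\bigl[g(Z)\bigl(h(X)+h(Y)\bigr)\bigr] \le \bigl(\Exp[g(Z)^2]\bigr)^{1/2}\bigl(\Exp[(h(X)+h(Y))^2]\bigr)^{1/2} = \sqrt{2},
\end{align*}
so $\mathrm{Corr}(h(X),g(Z))\le 2^{-1/2}$ directly, without ever invoking conditional expectations or worrying about versions and null sets. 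This shortcut (which is Yu's formulation) trades your identification of the optimizer $\phi$ for a one-line bound valid uniformly in $h$; your version buys nothing extra here, but it is not wrong --- just slightly more machinery than the statement needs.
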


\begin{proof}[Proof of Theorem~\ref{theo:main}]
The assertion is trivial if either $\phi\in\{0,1\}$ or $\psi \in \{0,1\}$; so let $\phi, \psi \in (0,1)$. 
For $k\ge 0$, define $f_k(x) = x^{k+1} / (k+1)$ with derivative $f_k'(x) = x^k$. A straightforward calculation yields $\Var(f_k(U))=\{(2k+3)(k+2)^2\}^{-1}$. Further, by an extension of Hoeffding's covariance formula \citep[Theorem 3.1]{Lo2017}, we have
\begin{align*}
\Cov(f_k(U), f_\ell(V) ) 
&= 
\int_0^1 \int_0^1 \big\{ \Prob(U >u, V >v) - (1-u)(1-v)  \big\} u^k v^\ell \diff u \diff v \\
&=
\int_0^1 \int_0^1 \big\{ \min(u^{1-\phi}v, uv^{1-\psi}) - uv  \big\} u^k v^\ell \diff u \diff v \\
&=
\frac{\phi \psi}{(k+2)(\ell+2) \{ (k+2)\phi + (\ell+2) \psi- \phi \psi \}},
\end{align*}
where the last equation follows from elementary calculations. As a consequence
\begin{align} \label{eq:int3}
\mathrm{Corr}(f_k(U), f_\ell(V)) = 
\frac{\phi \psi \sqrt{(2k+3)(2\ell+3)} }{  (k+2)\phi + (\ell+2) \psi- \phi \psi }.
\end{align}
Letting $k=\psi m$ and $\ell = \phi m$, this expression converges to $\sqrt{\phi \psi}$ for $m \to \infty$. As a consequence, $R(C_{\phi, \psi}) = R(U,V) \ge\sqrt{\phi \psi}$.

For the reverse inequality, let $X,Y,Z$ be independent standard uniform on $[0,1]$, and define
\[
(U,V) = (X^{1/(1-\phi)} \vee Z^{1/\phi}, Y^{1/(1-\psi)} \vee Z^{1/\psi}),
\]
which has cdf $C_{\phi, \psi}$. 
Further note that $U$ is conditionally independent of $V$ given $Z$. As a consequence, by Lemma~\ref{lem:yu1},
\[
R(U, V)  \le R(U, Z) R(V, Z).
\]
Note that $(U,Z)$ has joint cdf $D_\phi(u,v) = u^{1-\phi} (u^\phi \wedge v)$, and that, likewise, $(V,Z)$ has joint cdf $D_{\psi}$. It hence remains to show that $r(\xi) := R(D_\xi) \le \sqrt{\xi}$ for all $\xi \in (0,1)$.
In fact, we will show $r(\xi) = \sqrt{\xi}$, since we need `$\ge$' in the proof of `$\le$'.

We start by proving $r(\xi) \ge \sqrt{\xi}$. For that purpose, reconsider the function $f_k(x) = x^{k+1}/(k+1)$ with $k \ge 0$. A similar elementary calculation as for the proof of \eqref{eq:int3} shows that
\begin{align} \label{eq:int2}
\mathrm{Corr}(f_k(S), f_{k\xi}(T)) = \xi \frac{\sqrt{(2k+3)(2k \xi+3)}}{2k \xi+ \xi+2}, \quad (S,T) \sim D_\xi,
\end{align}
which converges to $\sqrt \xi$ for $k\to \infty$ and hence implies $r(\xi) \ge \sqrt{\xi}$.

For the proof of $r(\xi) \le \sqrt{\xi}$, recall $X,Y,Z$ from the beginning of the proof, and for $\xi_1, \xi_2 \in (0,1)$, let
$ \tilde W = Z$, $\tilde V =  Y^{1/(1-\xi_2)} \vee Z^{1/\xi_2}$ and $\tilde U = X^{1/(1-\xi_1)} \vee \tilde V^{1/\xi_1}$, i.e., 
\begin{align*}
(\tilde U, \tilde V, \tilde W)   
&=
(X^{1/(1-\xi_1)} \vee Y^{1/\{\xi_1(1-\xi_2)\}} \vee Z^{1/(\xi_1 \xi_2)}, Y^{1/(1-\xi_2)} \vee Z^{1/\xi_2}, Z).
\end{align*}
A straightforward calculation shows that $(\tilde U, \tilde V)$ has cdf $D_{\xi_1}$, that $(\tilde V, \tilde W)$ has cdf $D_{\xi_2}$ and that $(\tilde U, \tilde W)$ has cdf $D_{\xi_1 \xi_2}$.
Furthermore, $\tilde U$ and $ \tilde W$ are conditionally independent given $ \tilde V$, whence, by Lemma~\ref{lem:yu1},
\begin{align} \label{eq:ri}
r(\xi_1 \xi_2) = R(\tilde U, \tilde W) \le R(\tilde U,\tilde V) R(\tilde W,\tilde V) = r(\xi_1) r(\xi_2).
\end{align}
This implies monotonicity of $\xi \mapsto r(\xi)$. Furthermore, by setting $\xi_1=\xi_2=\xi$, we get equality in the previous display by Lemma~\ref{lem:yu1}, which yields
\begin{align} \label{eq:funr}
r(\xi)=r(\xi^2)^{1/2}.
\end{align}
Next, an application of Lemma~\ref{lem:yu2} gives $r(1/2) \le 2^{-1/2}$,
which, in view of the previous display, implies that $r(2^{-1/2}) = r(1/2)^{1/2} \le 2^{-1/4}$. Since we have already shown $r(\xi) \ge \sqrt \xi$ for all $\xi$, we obtain that 
$r(2^{-1/2})=2^{-1/4}$. 

For $m\in\N$, we may apply \eqref{eq:ri} $m$-times to obtain that
\[
2^{-m/4} \le r(2^{-m/2}) \le r(2^{-1/2})^{m} = 2^{-m/4},
\]
whence $r(2^{-m/2}) = 2^{-m/4}$. Next, for any $n\in\N$, we may apply \eqref{eq:funr} $(n-1)$-times to obtain that
\[
r(2^{-m/2^n}) = r(2^{-m/2^{n-1}})^{1/2} = r(2^{-m/2^{n-2}})^{1/2^2} = \dots = r(2^{-m/2})^{1/2^{n-1}} = (2^{-m/4})^{1/2^{n-1}}  = (2^{-m/2^n})^{1/2}.
\]
We have hence shown that $r(x)=\sqrt x$ for all $x \in \mathcal C := \{2^{-m/2^n} \in (0,1): n,m\in\N\}$. For any fixed $\xi\in(0,1)$, we can choose sequences $(x_k)_k, (y_k)_k$ in $\mathcal C$ converging to $\xi$ such that $x_k \le \xi \le y_k$ for all $k$. Hence, by monotonicity of $r$, $\sqrt{x_k}=r(x_k) \le r(\xi) \le r(y_k) = \sqrt {y_k}$, which implies $r(\xi)=\xi$ by taking the limit for $k\to\infty$. This finalizes the proof.
\end{proof}

\section{An application in extreme value statistics}
\label{sec:extremes}

The Marshall Olkin copula from \eqref{eq:mo-cop} is easily seen to be max-stable, that is, we have 
\[
C_{\phi, \psi}(u,v)=C_{\phi, \psi}(u^{1/m}, v^{1/m})^m \qquad \forall u,v\in[0,1], m \in \N.
\]
As a consequence, it is an extreme-value copula \citep{GudSeg10} and may hence appear as the weak limit copula of affinely standardized bivariate maxima. In fact, it happens to occur as a limit in the following simple situation: let $(X_n)_n$ denote an independent and identically distributed (iid) sequence of random variables satisfying the standard domain of attraction (DOA) condition \citep{DehFer06} that 
$(\max_{i=1}^r X_i - b_r) / a_r$ converges weakly to a non-degenerate limit distribution for $r\to\infty$, where $(b_r)_r \subset \R$ and $(a_r)_{r} \subset (0,\infty)$ are suitable scaling sequences. In that case, by the Fisher-Tippett-Gnedenko Theorem \citep{FisTip28, Gne43}, the limit distribution is necessarily the generalized extreme value distribution with cdf $G_\gamma(x) = \exp\{  (1+\gamma x)^{-1/\gamma}\} $ for $x$ such that $1+\gamma x > 0$; here, $\gamma\in\R$ denotes the extreme value index. Now, under the DOA condition, we have, for any $\zeta \in [0,1]$ and writing $\zeta_r = \lfloor r \zeta \lfloor$, 
\begin{align} \label{eq:max2}
\lim_{r \to \infty} \Prob\Big( \frac{\max_{i=1}^r X_i - b_r}{a_r} \le x, \frac{\max_{i= \zeta_r +1 }^{ \zeta_r + r} X_i - b_r}{a_r}  \le y\Big) = G_{\zeta, \gamma}(x,y) :=  C_{1-\zeta, 1-\zeta}\{ G_\gamma(x), G_\gamma(y) \}  
\end{align}
for all $x,y \in \R$; see, for instance, Lemma B.3 in \cite{BucZan23}. In fact, the result in \eqref{eq:max2} even holds if the iid sequence is replaced by a stationary time series, provided the long range dependence is suitably controlled.

The weak convergence in \eqref{eq:max2} is fundamental for the so-called sliding block maxima method in extreme value statistics. We refer to \cite{BucSeg18-sl, ZouVolBuc21, BucZan23} among others for details on the general approach. As it happens, even for time series data, the asymptotic behavior of respective estimators is typically driven by certain empirical means satisfying a central limit theorem with asymptotic variance formula given by
\[
\sigma^2_\sbl(h) := 2 \int_0^1 \Cov(h(Y_{1, \zeta}), h(Y_{2, \zeta})) \diff \zeta,
\]
where $(Y_{1, \zeta}, Y_{2, \zeta}) := (Y_1, Y_2) \sim G_{\zeta, \gamma}$ with $G_{\zeta, \gamma}$ from \eqref{eq:max2} and where $h$ is square-integrable with respect to $G_\gamma$. 
On the other hand, the traditional (disjoint) block maxima method satisfies respective limit theorems with asymptotic variance given by 
\[
\sigma^2_\dbl(h) := \Var(h(Y_1)), 
\]
where $Y_1 \sim G_\gamma$. The following theorem is essential for showing that the sliding block maxima method is statistically more efficient than the traditional disjoint block maxima method (again, we refer to the aforementioned references). The first part can be deduced from a technical result in  \cite{ZouVolBuc21}, see their Lemma A.10, but Theorem~\ref{theo:main} above offers the possibility for an elegant and short proof.

\begin{theorem}
For $h\colon \R \to \R$ with $ \int h(x)^2 \diff G_\gamma(x) < \infty$, we have $\sigma^2_\sbl(h) \leq \sigma^2_\dbl (h)$. Moreover, equality holds if and only if $h$ is a function such that $\mathrm{Corr}(h(Y_{1, \zeta}), h(Y_{2, \zeta})) = R(G_{\gamma, \zeta}) = 1-\zeta$ for Lebesgue almost every value of $\zeta \in [0,1]$.
\end{theorem}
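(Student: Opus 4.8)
The plan is to derive both the inequality and the equality characterization directly from Theorem~\ref{theo:main}, exploiting the defining extremal property of the maximal correlation coefficient. The starting observation is that, by construction, both marginals of $G_{\zeta,\gamma}$ coincide with $G_\gamma$; hence $\Var(h(Y_{1,\zeta})) = \Var(h(Y_{2,\zeta})) = \Var(h(Y_1)) = \sigma^2_\dbl(h)$ for every $\zeta \in [0,1]$. Writing the covariance as a correlation times the geometric mean of the two (equal) standard deviations, I would record the identity $\Cov(h(Y_{1,\zeta}), h(Y_{2,\zeta})) = \mathrm{Corr}(h(Y_{1,\zeta}), h(Y_{2,\zeta}))\, \sigma^2_\dbl(h)$.

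The key step is a pointwise bound in $\zeta$. Since $(Y_{1,\zeta}, Y_{2,\zeta})$ has copula $C_{1-\zeta, 1-\zeta}$ and the maximal correlation is invariant under square-integrable transformations of the margins, Theorem~\ref{theo:main} gives $R(G_{\gamma,\zeta}) = R(C_{1-\zeta,1-\zeta}) = \sqrt{(1-\zeta)(1-\zeta)} = 1-\zeta$. By the very definition of the maximal correlation as a supremum over pairs of transformations, specializing to $f = g = h$ yields $\mathrm{Corr}(h(Y_{1,\zeta}), h(Y_{2,\zeta})) \le R(G_{\gamma,\zeta}) = 1-\zeta$ for every $\zeta$. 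Substituting into the covariance identity and integrating, I would obtain
\[
\sigma^2_\sbl(h) = 2\int_0^1 \mathrm{Corr}(h(Y_{1,\zeta}), h(Y_{2,\zeta}))\, \sigma^2_\dbl(h) \diff\zeta \le 2\,\sigma^2_\dbl(h) \int_0^1 (1-\zeta) \diff\zeta = \sigma^2_\dbl(h),
\]
where the last equality uses $\int_0^1 (1-\zeta)\diff\zeta = 1/2$, which exactly absorbs the prefactor $2$.

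For the equality characterization I would rewrite the difference as
\[
\sigma^2_\dbl(h) - \sigma^2_\sbl(h) = 2\,\sigma^2_\dbl(h) \int_0^1 \big\{ (1-\zeta) - \mathrm{Corr}(h(Y_{1,\zeta}), h(Y_{2,\zeta})) \big\} \diff\zeta.
\]
The integrand is nonnegative by the pointwise bound above, so, in the non-degenerate case $\sigma^2_\dbl(h) > 0$, the integral vanishes if and only if the integrand is zero for Lebesgue-almost every $\zeta$, i.e.\ if and only if $\mathrm{Corr}(h(Y_{1,\zeta}), h(Y_{2,\zeta})) = 1-\zeta = R(G_{\gamma,\zeta})$ for a.e.\ $\zeta$, which is precisely the stated condition.

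The proof is short because Theorem~\ref{theo:main} does all the heavy lifting; the remaining points are bookkeeping rather than conceptual. I would confirm that $\zeta \mapsto \mathrm{Corr}(h(Y_{1,\zeta}), h(Y_{2,\zeta}))$ is measurable, so that the integrals make sense; this follows from the explicit Hoeffding-type integral representation of the covariance against $C_{1-\zeta,1-\zeta}$, which depends measurably (indeed continuously) on $\zeta$. I would also dispose of the degenerate case $\sigma^2_\dbl(h) = 0$ separately, where $h$ is $G_\gamma$-a.s.\ constant, both variances vanish, and equality holds trivially while the correlation condition is vacuous. I expect this last reconciliation of the degenerate case with the stated equality condition to be the only genuine subtlety.
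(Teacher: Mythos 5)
Your proposal is correct and follows essentially the same route as the paper: write the covariance as $\sigma^2_\dbl(h)$ times the correlation, bound the correlation pointwise in $\zeta$ by $R(C_{1-\zeta,1-\zeta}) = 1-\zeta$ via Theorem~\ref{theo:main}, and integrate. The paper's proof is exactly this chain of (in)equalities, dismissing the equality characterization as ``immediate,'' so your extra care with the degenerate case $\sigma^2_\dbl(h)=0$ and with measurability in $\zeta$ only adds detail the paper leaves implicit.
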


\begin{proof}  We have 
\begin{align*}
\sigma^2_\sbl(h) = 2 \int_0^1 \Cov(h(Y_{1,\zeta}), h(Y_{2, \zeta})) \diff \zeta
&= 
2 \sigma^2_\dbl (h) \int_0^1 \mathrm{Corr}(h(Y_{1,\zeta}), h(Y_{2, \zeta}))  \diff \zeta  \\
&\le 
2 \sigma^2_\dbl (h) \int_0^1 R(C_{1-\zeta, 1-\zeta}) \diff \zeta   \\
&= 
2 \sigma^2_\dbl (h) \int_0^1 1-\zeta \diff \zeta = \sigma^2_\dbl (h),
\end{align*}
where we used Theorem~\ref{theo:main} at the penultimate equality. The second statement is immediate. 
\end{proof}

\section*{Acknowledgements} 
Financial support by the German Research Foundation (DFG grant number 465665892) is gratefully acknowledged.
The authors are grateful to the participants of the Oberwolfach Workshop on ``Mathematics, Statistics, and Geometry of Extreme Events in High Dimensions'' for their valuable comments.

\bibliographystyle{apalike}
\bibliography{biblio}

\end{document}